\theoremstyle{plain}
\newtheorem{theorem}{Theorem}
\theoremstyle{definition}
\newtheorem{remark}{Remark}
\newtheorem{property}{Property}
\begin{document}

\noindent UDC 519.17

\title{Algorithm for Constructing Related Spanning Directed Forests of Minimum Weight} 
\author{V.\,A. Buslov}
\begin{center}
{\bf Algorithm for Constructing Related Spanning Directed Forests of Minimum Weight} 
\end{center}
\begin{center}
{\large V.\,A. Buslov}
\end{center}

\begin{abstract} 
An algorithm is proposed for constructing directed spanning forests of the minimum weight, in which the maximum possible degree of affinity between the minimum forests is preserved when the number of trees changes. The correctness of the algorithm is checked and its complexity is determined, which does not exceed $ O (N ^ 3) $ for dense graphs. The result of the algorithm is a set of related spanning minimal forests consisting of $ k $ trees for all admissible $ k $. 
\end{abstract}

This article is based on the results of \cite{V6}, and the proposed algorithm is a significant modification of the \cite{V8} algorithm. The notations and definitions correspond to those adopted in these two works, up to some specially specified simplifications.

\section{Basic notations and definitions}

The term "graph"\ is used for both undirected and directed graphs, unless this leads to confusion.

For a digraph $G$, we denote the set of its vertices by ${\cal V}G$, and the set of arcs by ${\cal A}G$.
 
The original object is a weighted digraph $V$, with a set of vertices ${\cal V}V={\cal N}$, $|{\cal N}|=N$; the arcs $(q,p)\in{\cal A}V$ are assigned real weights $v_{qp}$.  We consider spanning subgraphs (with a set of vertices ${\cal N}$) that are entering forests. An entering (incoming) forest is a digraph in which at most one arc comes from each vertex and there are no cycles. A tree is a connected component of the forest. The root of a tree (forest) is the vertex from which arc does not come. Let $T^F_q$ denote the tree of the forest $F$ rooted at vertex $q$. The set of roots of $F$ is denoted by ${\cal K}_F$. The outgoing forest is obtained by replacing all the arcs of the entering forest with inverse ones (then the root is the vertex into which arcs do not go). Only entering forests are used; in the future, simply forests where this is not required to be specifically specified.

The subgraph $H$ of graph $G$ induced by the set ${\cal S}\subset{\cal V}G$ (or the restriction of the graph $G$ to the set ${\cal S}$) is a subgraph whose ${ \cal V}H={\cal S}$, and the set of its arcs are all arcs of the graph $G$, both ends of which belong to the set ${\cal S}$. For it we use the notation $H=G|_{\cal S}$.

If there is an arc whose outcome belongs to the set ${\cal S}$, but whose entry does not belong, then we say that the arc comes from the set ${\cal S}$. Similarly, if there is an arc whose entry belongs to ${\cal S}$ and whose outcome does not belong, then we say that the arc enters ${\cal S}$. The outgoing neighborhood ${\cal N}^{out}_{\cal S}(G)$ of the set ${\cal S}$ is the set of entries of arcs outgoing in graph $G$ from the set ${\cal S }$; The incoming neighborhood ${\cal N}^{in}_{\cal S}(G)$ is defined similarly.
 
For a subgraph $G$ of  graph $V$ and a subset of the vertex set ${\cal S}\subseteq {\cal N}$, we introduce weights 
\begin{equation}
\Upsilon^G_{\cal S}=\sum_{\begin{smallmatrix}q\in{\cal S} \\ (q,p)\in {\cal A}G\end{smallmatrix}} v_{qp} \ , \ \ \Upsilon^G=\Upsilon^G_{\cal N}=\sum_{(q,p)\in {\cal A}G} v_{qp} \ . \label{ves}
\end{equation}
The value $\Upsilon^G_{\cal S}$ is also formed from arcs whose entries do not belong to the set ${\cal S}$.  It is with this definition that the weight additivity property is satisfied: $\Upsilon^G_{{\cal S}\cup{\cal S'}}=\Upsilon^G_{\cal S}+\Upsilon^G_{\cal S'}$ if  ${\cal S}\cap{\cal S'}=\emptyset$. If in $G$ arcs do not come from the set ${\cal S}$ itself, then $\Upsilon^G_{\cal S}=\Upsilon^{G|_{\cal S}} $.

${\cal F}^k$ is a set of spanning forests consisting of $k=1,2,\ldots ,N$ trees.  We denote the minimum weight $\Upsilon^F$ among all forests $F\in{\cal F}^k$ by $\phi^k$:

$$ \phi^k=\min_{F\in{\cal
F}^k}\Upsilon^F \ . $$

If ${\cal F}^k=\emptyset$, then we set $\phi^k=\infty$, in particular, $\phi^0=\infty$. The set ${\cal F}^N$ consists of only the empty forest and $\phi^N=0$. Any forest from ${\cal F}^{N-1}$ contains exactly one arc, so $\phi^{N-1}=\min\limits_{(q,p)\in{\cal A}V }v_{qp}$.

$\tilde{\cal F}^k$ is a subset of the set of forests ${\cal F}^k$ on which the minimum $\phi^k$ is achieved: $
F\in\tilde{\cal F}^k\Leftrightarrow F\in{\cal F}^k$ and $\Upsilon^F=\phi^k$. Forests from $\tilde{\cal F}^k$ are called minimal.

$F^G_{\uparrow{\cal S}}$ --- a graph obtained from $F$ by replacing arcs coming from the vertices of the set ${\cal S}$ with arcs coming from the same vertices in the graph $G $. 

For any subset ${\cal S}\subset {\cal N}$, its complement $\overline{\cal S}={\cal N}\setminus {\cal S}$.

We introduce the remaining necessary definitions and notations as needed.

\section{Objects and properties used}

\subsection{Arc replacement operation}

We will need some properties of the arc replacement operation.
The following property \cite[Corollary 6 from Lemma 1]{V6} of the arc replacement operation is valid if $F$ and $G$ are forests. 

\begin{property}{\it Let $F$ and $G$ be forests with the same vertex set, and let $T^F$ and $T^G$ be trees of $F$ and $G$ respectively. Assume that ${\cal D}\subset{\cal V}T^F\cap{\cal V}T^G$, ${\cal N}^{in}_{\cal D}(G)=\emptyset$ and ${\cal N}^{out}_{\cal D}(G)\subset{\cal V}T^G\setminus{\cal V}T^F $. Then both $F_{\uparrow\cal D}^G$ and $G_{\uparrow\cal D}^F$ are forests.}
\end{property}

If forests $F$ and $G$ are minimal, then \cite[Lemma 3]{V6}
is valid 
\begin{property}
	{\it Let $F\in\tilde{\cal F}^{k+1}$ and $G\in\tilde{\cal F}^k$, and let ${\cal D}$ --- a subset of the vertex set such that $F^G_{\uparrow\cal D} \in {\cal F}^k$ and $G^F_{\uparrow\cal D}\in {\cal F}^{k+ 1}$. Then $F^G_{\uparrow\cal D} \in \tilde{\cal F}^k$ and $G^F_{\uparrow\cal D}\in \tilde{\cal F}^{k+1 }$.}
\end{property}
\subsection{Different minima on subsets of the set of vertices}

In \cite{V8}, for any ${\cal S}\subset{\cal N}$, a set of special tree-like minima is defined:
\begin{equation}
 \lambda_{\cal S}^{\bullet q}=\min_{T\in {\cal T}^{\bullet q}_{\cal S}}\Upsilon^T, \  \lambda_{\cal S}^\bullet= \min_{q\in{\cal S}} \lambda_{\cal S}^{\bullet q},  
\label{bt}
\end{equation} 
where ${\cal T}^{\bullet q}_{\cal S}$ is the set of trees with vertex set ${\cal S}$ rooted at $q\in {\cal S}$. 

Forest-like minima are also defined in \cite{V8}
   
\begin{equation}
 \mu_{\cal S}^{\bullet q}=\min_{F\in {\cal F}^{\bullet q}_{\cal S}}\Upsilon^F_{\cal S}\ , \ \ \mu_{\cal S}^\bullet= \min_{q\in{\cal S}} \mu_{\cal S}^{\bullet q}, 
\label{bf}
\end{equation} 
where ${\cal F}^{\bullet q}_{\cal S}$ is the set of spanning forests in which the set ${\cal S}$ contains exactly one root --- vertex $q\in{ \cal S}$. 
Arcs coming from the vertices of $\overline{\cal S}$ in any forest $F\in {\cal F}^{\bullet q}_{\cal S}$ are arbitrary and do not affect the value of weight $\mu^{\bullet q}_{\cal S}$.

For these quantities the following is true

\begin{property}\cite[Proposition 2]{V8}
{\it Let $F\in\tilde{\cal F}^k$ and $s\in{\cal K}_F$, 
then }
\begin{equation}
\mu_{\cal S}^\bullet = \mu_{\cal S}^{\bullet s}= 
\lambda_{\cal S}^\bullet = \lambda_{\cal S}^{\bullet s} =\Upsilon^F_{\cal S} \ ,  
\label{mlu}
\end{equation}
{\it where}  ${\cal S}={\cal V}T^F_s$.   
\end{property}

Another one forest-like minimum option is also defined \cite{V8}:
\begin{equation}
\mu^\circ_{{\cal S}} = \min_{F\in{\cal F}^\circ_{{\cal S}}} \Upsilon^F_{\cal S} \ , 
\label{cf} 
 \end{equation} 
where ${\cal F}^{\circ}_{\cal S}$ is the set of spanning forests in which arcs come from all vertices of the set ${\cal S}$. And also arcs coming from the vertices of $\overline{\cal S}$ in any forest $F\in {\cal F}^\circ_{\cal S}$  are arbitrary and do not affect the value of weight $\mu^\circ_{\cal S}$. 

\section{Constructing related forests from pseudo-related ones}

\subsection{Pseudo-related and related forests}

Spanning forests with varying numbers of trees may share some similar features. Pseudo-related forests \cite{V8} demonstrate sufficient similarity. 

We call the forest $F\in{\cal F}^{k+1}$ {\it pseudo-ancestor} of the forest $G\in{\cal F}^{k}$, and $G$ {\it pseudo-descendant} of $F$ if there is a root $y$ of $F$ such that: ${\cal K}_G={\cal K}_F\setminus \{ y \}$ and $G|_{{\cal V}T^F_q}=T^F_q$ for $q\in{\cal K}_G$. We call such forests pseudo-related, and the set of such pseudo-descendants of $F$ is denoted by ${\cal P}^F_y$.

Since $F$ and its pseudo-descendant $G\in{\cal P}^F_y$ differ only in the arcs coming from vertices of the only tree of $F$ (the tree rooted at vertex $y$), then
\begin{equation}
\Upsilon^G-\Upsilon^F=\Upsilon^G_{\cal Y}-\Upsilon^F_{\cal Y} \ , \ \ \ {\cal Y}={\cal V}T^F_y \ . 
\label{ps}
\end{equation}

The greatest similarity is demonstrated by related forests \cite{V6}, which, taking into account the definition of pseudo-related, can be defined as follows.

We call the forest $F\in{\cal F}^{k+1}$ the {\it ancestor} of the forest $R\in{\cal F}^{k}$, and $R$ {\it the descendant} of the forest $ F$, if for some $y\in{\cal K}_F$ the following holds: $R\in{\cal P}^F_y$ and $R|_{{\cal V}T^F_y}$ is a tree. The forests $F$ and $R$ are called related.
In fact, this means that in $R$ there is only one arc coming from the set ${\cal V}T^F_y$. The entry of this arc belongs to some tree $T^F_x$. So: ${\cal K}_R={\cal K}_F\setminus \{ y \}$; $T^R_q=T^F_q$ for $q\in{\cal K}_R\setminus \{ x\}$; $R|_{{\cal V}T^F_x}=T^F_x$.  This corresponds to the definition of \cite{V6}, which does not use the concept of pseudo-relatedness. 

As shown in Fig. \ref{poda} for pseudo-descendant $G\in {\cal P}^F_y$ , generally speaking, several arcs come from the set ${\cal V}T^F_y$ to different trees of $F$, including, possibly, several arcs come into one tree, and even several arcs come into one vertex. The descendant has a unique arc coming from the set ${\cal V}T^F_y$. Its entry belongs to the set of vertices of a certain tree ${\cal V}T^F_x$ (see Fig. \ref{pods}).  

If $R\in{\cal P}^F_y$ is also a descendant of the forest $F$, then (\ref{ps}) still holds for it, with $G$ replaced by $R$. If $x$ is the root of the forest tree $R$ that “digested” the tree $T^F_y$ (${\cal V}T^R_x =  {\cal V}T_x^F\cup{\cal V}T_y^F$), then completed
\begin{equation}
\Upsilon^R_{\cal Z}=\Upsilon^F_{\cal X}+\Upsilon^R_{\cal Y} \ , \ \ {\cal Y}={\cal V}T^F_y , \   {\cal X}={\cal V}T^F_x  , \ \  {\cal Z}={\cal X}\cup{\cal Y} \ . 
\label{p}
\end{equation}

Let us formulate the main theorem \cite[Theorem 2 (on  related forests)]{V6} as the property

\begin{property} {\it Let ${\cal F}^{k}\neq \emptyset$, then any forest from $\tilde{\cal F}^{k+1}$ has a descendant in $\tilde{\cal F}^{k }$ and vice versa --- any forest from $\tilde{\cal F}^{k}$ has an ancestor in $\tilde{\cal F}^{k+1}$.} 
\end{property}

This property indicates the minimum possible changes that must be made to a minimal spanning forest with some number of trees in order to obtain a minimal spanning forest with the number of trees differing by one.

Creating a direct algorithm for constructing  $R\in\tilde{\cal F}^{k}$, which is a minimal descendant of the forest $F\in\tilde{\cal F}^{k+1}$, encounters an additional difficulty. It is necessary to determine which vertex of the "modified" \ tree $T^F_y$ should be the root of the tree $R|_{{\cal V}T^F_y}$.  
If you simply iterate through the vertices of the set ${\cal V}T^F_y$, assigned by the roots of the tree 
$R|_{{\cal V}T^F_y}$, this will inevitably increase the complexity of the algorithm.  
If the task is simply to build a minimal forest from the set $\tilde{\cal F}^{k}$ with an existing forest from $\tilde{\cal F}^{k+1}$, there is no need to track such a refined relationship.
However, for practical purposes it is just the opposite: the more interesting and important situation  under  constructing minimal forests is when the forest structure is preserved to the maximum possible extent. The above problem of determining this virtual root (the root of tree $R|_{{\cal V}T^F_y}$) can be circumvented.

\begin{figure}[h]
\unitlength=1mm
\begin{center}
\begin{picture}(100,41)

\put (10,35){${\cal V}T^F_y$}
\put (80,35){${\cal V}T^F_x$}

\put (39,12){$y$}
\put (60,7){$w$}
\put (70,7){$u$}
\put (90,17){$x$}

\put(10,10){$\centerdot$}
\put(10,30){$\centerdot$}
\put(20,10){$\centerdot$}
\put(20,30){$\centerdot$}
\put(50,10){$\centerdot$}
\put(50,30){$\centerdot$}
\put(60,10){$\centerdot$}
\put(60,30){$\centerdot$}
\put(70,10){$\centerdot$}
\put(70,30){$\centerdot$}

\put(80,20){$\centerdot$}
\put(30,5){$\centerdot$}
\put(40,5){$\centerdot$}
\put(30,15){$\centerdot$}
\put(40,15){$\centerdot$}
\put(30,25){$\centerdot$}
\put(40,25){$\centerdot$}
\put(30,35){$\centerdot$}
\put(40,35){$\centerdot$}
\put(90,20){$\centerdot$}

\put(10,11){\vector(0,1){19}}
\put(10,30){\vector(4,-1){20}}
\put(21,11){\vector(2,3){9}}
\put(20,30){\vector(-1,0){9}}
\put(30,6){\vector(-2,1){9}}
\put(31,25){\vector(1,-1){9}}
\put(30,15){\vector(-2,-1){9}}
\put(31,35){\vector(0,-1){9}}

\put(40,5){\vector(-1,0){9}}
\put(41,26){\vector(2,1){9}}
\put(41,35){\vector(2,-1){9}}

\put(50,11){\vector(-2,1){9}}
\put(51,30){\vector(1,0){9}}

\put(61,11){\vector(0,1){19}}
\put(61,30){\vector(-3,-2){21}}

\put(81,20){\vector(1,0){9}}

\put(71,11){\vector(1,1){9}}
\put(71,30){\vector(1,-1){9}}

\put(35,20){\oval(60,40)}
\put(80,20){\oval(30,40)}

\end{picture} 
\caption{\small Arcs of $F$ are depicted, coming from the vertices of its two trees with roots at $y$ and $x$.}
\label{f}
\end{center}
\end{figure}
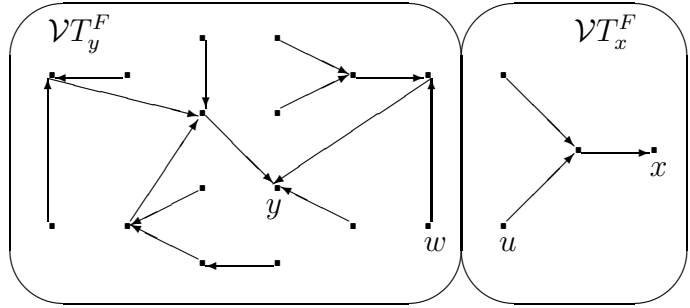

\subsection{Descendants among pseudo-descendants}

Our goal is to build minimal related forests using the \cite{V8} algorithm, which builds arbitrary pseudo-related forests of minimal weight. We need to make sure that the minimal forest $F\in\tilde{\cal F}^{k+1}$ has a descendant in the selected class of minimal pseudo-descendants ($\tilde{\cal F}^{k}\cap{\cal P}^F_y$). 

\begin{remark}
In fact, the indicated property is contained in the structure of proof the   Property 3 \cite[Theorem 2 (on related forests)]{V6}: taking two arbitrary forests $F\in\tilde{\cal F}^{k+1}$ and $G\in\tilde{\cal F}^{k}$ and, using only them, both the minimal descendant of the forest $F$ and the minimal ancestor of $G$ are constructed. We need not only to find a descendant in the selected subset of pseudo-descendants, but also to specify the exact transformations that are necessary to transform a pseudo-descendant into a descendant.   
\end{remark}

\begin{figure}[h]
\unitlength=1mm
\begin{center}
\begin{picture}(100,41)

\put (10,35){${\cal V}T^F_y$}
\put (80,35){${\cal V}T^G_x$}
\put (57,15){${\cal D}$}

\put (38,15){$y$}
\put (60,7){$w$}
\put (70,7){$u$}
\put (90,17){$x$}

\put(10,10){$\centerdot$}
\put(10,30){$\centerdot$}
\put(20,10){$\centerdot$}
\put(20,30){$\centerdot$}
\put(50,10){$\centerdot$}
\put(50,30){$\centerdot$}
\put(60,10){$\centerdot$}
\put(60,30){$\centerdot$}
\put(70,10){$\centerdot$}
\put(70,30){$\centerdot$}

\put(80,20){$\centerdot$}
\put(30,5){$\centerdot$}
\put(40,5){$\centerdot$}
\put(30,15){$\centerdot$}
\put(40,15){$\centerdot$}
\put(30,25){$\centerdot$}
\put(40,25){$\centerdot$}
\put(30,35){$\centerdot$}
\put(40,35){$\centerdot$}
\put(90,20){$\centerdot$}

\put(10,10){\vector(-1,0){10}}
\put(10,30){\vector(-1,0){10}}
\put(20,10){\vector(-1,0){9}}
\put(20,30){\vector(-1,0){9}}
\put(30,6){\vector(-2,1){9}}
\put(30,26){\vector(-2,1){9}}
\put(30,15){\vector(-2,-1){9}}
\put(30,35){\vector(-2,-1){9}}

\put(41,6){\vector(0,1){9}}
\put(41,26){\vector(2,1){9}}
\put(41,15){\vector(4,-1){19}}
\put(41,35){\vector(4,-1){19}}

\put(51,10){\vector(1,0){9}}
\put(51,30){\vector(1,-1){19}}

\put(61,10){\vector(1,0){9}}
\put(61,30){\vector(1,0){9}}

\put(81,20){\vector(1,0){9}}

\put(71,11){\vector(1,1){9}}
\put(71,30){\vector(1,-1){9}}

\put(35,20){\oval(60,40)}
\put(65,21){\oval(60,40)}
\put(50,10){\oval(30,18)}

\end{picture} 
\caption{\small Arcs of $G$ (which is a pseudo-descendant of $F$) coming from vertices of the set ${\cal V}T^F_y\cup {\cal V}T^G_x$ are depicted. ${\cal D}$ is the set of vertices of connected component of the induced subgraph $G|_{{\cal V}T^F_y}$, which includes the vertex $y$. Arcs of the forest $H$ are the arcs of $G$ coming from the vertices of the set ${\cal V}T^F_y$.}
\label{poda}
\end{center}
\end{figure}
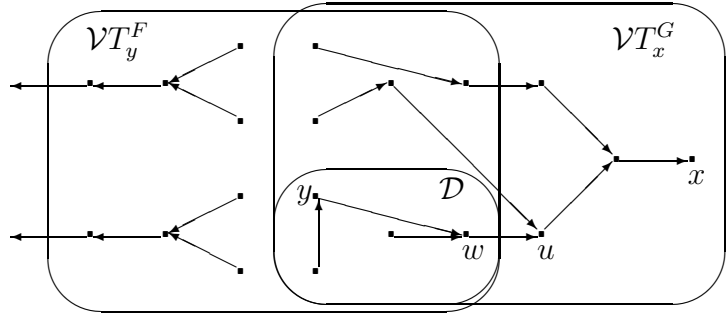

\begin{theorem}
Let $F\in\tilde{\cal F}^{k+1}$, $y\in{\cal K}_F$,  $G\in\tilde{\cal F}^{k}\cap {\cal P}^F_y$. 
Let also ${\cal D}$ be  
the set of vertices of that connected component of the induced subgraph $G|_{{\cal V}T^F_y}$ that contains vertex $y$. 
Then graph $F^G_{\uparrow{\cal D}}$ is a minimal descendant of $F$. 
\end{theorem}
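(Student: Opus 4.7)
The plan is to apply Property 1 to produce a forest $F^G_{\uparrow{\cal D}} \in {\cal F}^k$ (and simultaneously $G^F_{\uparrow{\cal D}} \in {\cal F}^{k+1}$), then verify that $F^G_{\uparrow{\cal D}}$ is in fact a descendant of $F$ rather than merely a pseudo-descendant, and finally invoke Property 2 to upgrade it to a minimal one. To set things up I will first locate the $G$-tree that contains ${\cal D}$. Since $G \in {\cal P}^F_y$, the roots of $G$ are exactly ${\cal K}_F \setminus \{y\}$, so every vertex of ${\cal V}T^F_y$ has out-degree one in $G$. Because $G$ is acyclic, ${\cal D}$ then contains a unique vertex $z$ whose $G$-arc leaves ${\cal V}T^F_y$, while every other vertex of ${\cal D}$ sends its $G$-arc inside ${\cal D}$. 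Letting $x \in {\cal K}_G$ be the root of the $G$-tree containing $z$, I get $y \in {\cal V}T^G_x$ (since $y$ and $z$ belong to the same component of $G|_{{\cal V}T^F_y}$), and consequently ${\cal D} \subset {\cal V}T^F_y \cap {\cal V}T^G_x$.

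Next I will check the remaining hypotheses of Property 1. No $G$-arc enters ${\cal D}$: an arc from $\overline{{\cal V}T^F_y}$ into ${\cal D}$ would originate in some $T^F_q$ with $q \neq y$, but $G|_{{\cal V}T^F_q} = T^F_q$ keeps such arcs inside ${\cal V}T^F_q$; an arc from ${\cal V}T^F_y \setminus {\cal D}$ into ${\cal D}$ would contradict the definition of ${\cal D}$ as a component. The only $G$-arc leaving ${\cal D}$ is the one from $z$, whose head lies in ${\cal V}T^G_x \setminus {\cal V}T^F_y$. Property 1 then yields that both $F^G_{\uparrow{\cal D}}$ and $G^F_{\uparrow{\cal D}}$ are forests. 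Counting roots, replacing the arcs of $F$ issuing from ${\cal D}$ by those of $G$ turns $y$ into a non-root and changes no other root status, so $F^G_{\uparrow{\cal D}} \in {\cal F}^k$; symmetrically $G^F_{\uparrow{\cal D}} \in {\cal F}^{k+1}$.

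It remains to confirm descendancy and then minimality. For each $q \in {\cal K}_F \setminus \{y\}$ the set ${\cal V}T^F_q$ is disjoint from ${\cal D}$, so $F^G_{\uparrow{\cal D}}|_{{\cal V}T^F_q} = T^F_q$, which gives $F^G_{\uparrow{\cal D}} \in {\cal P}^F_y$. The restriction $F^G_{\uparrow{\cal D}}|_{{\cal V}T^F_y}$ contains $|{\cal V}T^F_y|-1$ arcs (the arcs of $T^F_y$ issuing from vertices outside ${\cal D}$, together with the $G$-arcs from ${\cal D} \setminus \{z\}$, all of which stay in ${\cal V}T^F_y$) and is a subforest on $|{\cal V}T^F_y|$ vertices, hence a tree rooted at $z$; this is precisely the extra condition distinguishing descendants from pseudo-descendants. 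Applying Property 2 to $F$, $G$, and ${\cal D}$ then concludes the proof. I expect the main obstacle to be the structural bookkeeping in the second step: pinning down the unique exit vertex $z$, identifying the enveloping $G$-tree $T^G_x$, and systematically ruling out every possible $G$-arc into ${\cal D}$ so that Property 1 genuinely applies.
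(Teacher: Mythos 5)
Your proposal is correct and follows essentially the same route as the paper: verify the hypotheses of Property 1 for $F$, $G$, ${\cal D}$ (with your exit vertex $z$ playing the role of the paper's $w$ and the enveloping tree $T^G_x$ identified the same way), count roots to place $F^G_{\uparrow{\cal D}}$ in ${\cal F}^k$, invoke Property 2 for minimality, and confirm descendancy via the unique arc leaving ${\cal V}T^F_y$. The only cosmetic difference is that you make the tree-hood of $R|_{{\cal V}T^F_y}$ explicit by arc counting, where the paper asserts it directly.
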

\begin{proof}
Let us make sure that graphs $F$, $G$ and the set ${\cal D}$ satisfy conditions of Property 1.
Let the vertex $y$ in $G$ belong to a tree rooted at vertex $x$: $y\in{\cal V}T^G_x$. Graph $U=G|_{{\cal V}T^F_y}$ is a forest (in general, any subgraph of a forest is a forest), and its connected components are trees. Let $w$ be a root of that tree of $U$ to which vertex $y$ belongs:   $y\in{\cal D}={\cal V}T^U_w$.    
In $G$ itself, an arc comes from the vertex $w$ (in $G$, arcs come from all vertices of the set ${\cal V}T^F_y$) the entry of which does not belong to the set ${\cal V}T^F_y $ (in Fig. \ref{poda} this is arc $(w,u)$). This is the only arc coming in $G$ from the set ${\cal D}$. Thus, ${\cal N}^{out}_{\cal D}(G)\subset{\cal V}T^G_x\setminus{\cal V}T^F_y $.

Further, in $G$, since it is a pseudo-descendant of  $F$, no arcs come from the set $\overline{{\cal V}T^F_y}$ at all. Thus, in $G$ there are no arcs coming from $\overline{{\cal V}T^F_y}$ and entering ${\cal D}\subseteq {\cal V}T^F_y$. Since ${\cal D}$ is the set of vertices of the connected component of the induced subgraph $G|_{{\cal V}T^F_y}$, then in $G$ there are no arcs coming from ${\cal V} T^F_y \setminus {\cal D}$ and entering ${\cal D}$. Thus,
 ${\cal N}^{in}_{\cal D}(G)=\emptyset$. Considering also that 
 ${\cal D}\subset{\cal V}T^F_y\cap{\cal V}T^G_x$, we obtain from Property 1 that the graphs $F_{\uparrow\cal D}^G$ and $G_ {\uparrow\cal D}^F$ are forests.
 
 The set ${\cal D}$ itself contains exactly one root of $F$ (vertex $y$) and no roots of $G$, so $R=F^G_{\uparrow\cal D} \in { \cal F}^k$ and $G^F_{\uparrow\cal D}\in {\cal F}^{k+1}$. But then the conditions of Property 2 are also satisfied. Therefore, in particular,  $R\in\tilde{\cal F}^{k}$.
 
Let's check that $R$ is a descendant of $F$. Really, 
$R\in{\cal P}^F_y$, since forests $F$ and $R$ differ in no more than arcs coming from vertices of the set ${\cal V}T^F_y$ (more precisely, from the vertices of the set ${\cal D}\subseteq{\cal V}T^F_y$) and in ${\cal V}T^F_y$ there are no roots of  $R$. By construction in $R$, a single arc comes from the set ${\cal V}T^F_y$ (in the example in Fig. \ref{poda} and \ref{pods} this is the arc $(w,u)$). The outcome $w$ of this arc is the root of the tree $R|_{{\cal V}T^F_y}$.
\end{proof} 

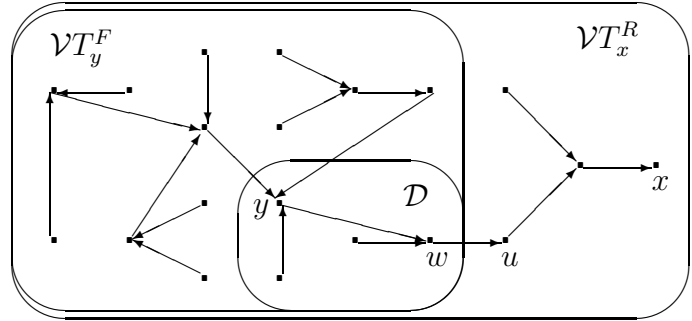
\begin{figure}[h]
\unitlength=1mm
\begin{center}
\begin{picture}(100,42)

\put (10,35){${\cal V}T^F_y$}
\put (80,36){${\cal V}T^R_x$}
\put (57,15){${\cal D}$}

\put (37,14){$y$}
\put (60,7){$w$}
\put (70,7){$u$}
\put (90,17){$x$}

\put(10,10){$\centerdot$}
\put(10,30){$\centerdot$}
\put(20,10){$\centerdot$}
\put(20,30){$\centerdot$}
\put(50,10){$\centerdot$}
\put(50,30){$\centerdot$}
\put(60,10){$\centerdot$}
\put(60,30){$\centerdot$}
\put(70,10){$\centerdot$}
\put(70,30){$\centerdot$}

\put(80,20){$\centerdot$}
\put(30,5){$\centerdot$}
\put(40,5){$\centerdot$}
\put(30,15){$\centerdot$}
\put(40,15){$\centerdot$}
\put(30,25){$\centerdot$}
\put(40,25){$\centerdot$}
\put(30,35){$\centerdot$}
\put(40,35){$\centerdot$}
\put(90,20){$\centerdot$}

\put(10,11){\vector(0,1){19}}
\put(10,30){\vector(4,-1){20}}
\put(21,11){\vector(2,3){9}}
\put(20,30){\vector(-1,0){9}}
\put(30,6){\vector(-2,1){9}}
\put(31,25){\vector(1,-1){9}}
\put(30,15){\vector(-2,-1){9}}
\put(31,35){\vector(0,-1){9}}

\put(41,6){\vector(0,1){9}}
\put(41,26){\vector(2,1){9}}
\put(41,15){\vector(4,-1){19}}
\put(41,35){\vector(2,-1){9}}

\put(51,10){\vector(1,0){9}}
\put(51,30){\vector(1,0){9}}

\put(61,10){\vector(1,0){9}}
\put(61,30){\vector(-3,-2){21}}

\put(81,20){\vector(1,0){9}}

\put(71,11){\vector(1,1){9}}
\put(71,30){\vector(1,-1){9}}

\put(35,21){\oval(60,40)}
\put(50,21){\oval(90,42)}
\put(50,11){\oval(30,20)}

\end{picture} 
\caption{\small Arcs of the forest $R=F^G_{\uparrow{\cal D}}$ (which is a descendant of $F$) coming from the vertices ${\cal V}T^F_y\cup {\cal V}T^G_x$ are depicted. Arcs coming from vertices of $\overline{\cal D}$ coincide for forests $R$ and $F$.}
\label{pods}
\end{center}
\end{figure}

\subsection{Place of the Chu-Liu-Edmonds algorithm}

The Chu-Liu-Edmonds algorithm \cite{china},\cite{Edmonds} (further CLE) is the only algorithm (up to different implementations \cite{Tar} and \cite{GGST}) for constructing a spanning directed tree of minimum weight with a given root. \footnote{Note that the CLE algorithm was originally formulated for an outgoing spanning tree (also called optimal branching), while our consideration concerns incoming forests and trees. To replace one type with another, it is enough simply transpose the weighted adjacency matrix.} The proposed algorithm (as well as the \cite{V8} algorithm) uses the CLE algorithm, and also, in particular, builds a spanning tree of minimum weight (regardless of the root). Of course, it cannot be considered a variant of the CLE algorithm: the latter is used as a component, and whatever implementation of it is taken will appear as a subroutine. 

All the above weight minima with an arbitrary selected set of vertices ${\cal S}$ are calculated using the CLE algorithm \cite{V8}. For our purposes, taking into account Property 3 (\ref{mlu}), only two of them are actually used: $\lambda_{\cal S}^\bullet$ and $\mu_{\cal S}^\circ$. 
And although calculating the values of $\lambda_{\cal S}^\bullet$ requires assigning each vertex of the set ${\cal S}$ sequentially to the root, the proposed algorithm is designed in such a way that direct application of the CLE algorithm can be avoided to determine such minima.
 The values $\lambda_{\cal S}^\bullet$ are determined from a variant of the formula (\ref{p}). But the values  $\mu_{\cal S}^\circ$ really have to be counted. 
 But their convenience lies in the fact that the problem of “blindness” of the CLE algorithm regarding the root (for it to work, the root must be assigned) does not exist for them --- it is known what should be the root.
Indeed \cite{V8} to determine the minima of $\mu^\circ_{\cal S}$, vertices from $\overline{\cal S}$ merge into one vertex $*$.
Special graph $W$ is constructed on the set of vertices ${\cal S}\cup\{ *\}$. The set of its arcs consists of all $(q,p)\in{\cal A}V$ for $q,p\in{\cal S}$, with the same weights $w_{qp}=v_{qp}$ , as well as from arcs $(q,*)$ with weights 

\begin{equation}
w_{q*}=\min_{p\in\overline{\cal S}}v_{qp}, \ q\in{\cal S} \ . 
\label{ficsd}
\end{equation}
Next, using the CLE algorithm, a minimum spanning tree of the graph $W$ is constructed with the root at vertex $*$. According to it, the corresponding forest from 
${\cal F}^\circ_{\cal S}$ is restored which gives a minimum in (\ref{cf}).

\section{Bypassing the stage of pseudo-descendants to descendants}

The justification for constructing minimal pseudo-descendants \cite[Theorem 2]{V8} let us formulate as the following  
\begin{property}
{\it Let $F\in \tilde{\cal F}^{k+1}$, $y\in{\cal K}_F$ and $G\in{\cal P}^F_y$. Then, in order for $G\in \tilde{\cal F}^k$, it is necessary and sufficient that 
\begin{equation}
\Upsilon^G_{{\cal V}T^F_y}=\mu_{{\cal V}T^F_y}^\circ
\label{ul}
\end{equation}
 and    
\begin{equation}
\mu_{{\cal V}T^F_y}^\circ- 
\mu^\bullet_{{\cal V}T^F_y} = \min_{q\in{\cal K}_F} \left( \mu_{{\cal V}T^F_q}^\circ- 
\mu^\bullet_{{\cal V}T^F_q}  \right)  \ . 
\label{potom}
\end{equation}}
\end{property}

\begin{remark}
Due to the equality of forest-like (type $\mu_{\cal S}^{\bullet}$) and tree-like (type $\lambda_{\cal S}^{\bullet}$) minima at $ {\cal S}$ coinciding with the set of vertices of any tree of any minimal forest, it becomes arbitrary which type of minimum to indicate in the formulas. It seems natural to indicate the one that was actually considered.
Namely, if the spanning forests ${\cal F}_{\cal S}^{\bullet}$ were used in the calculations, then specify $\mu_{\cal S}^{\bullet}$. If trees from the sets ${\cal T}_{\cal S}^{\bullet}$ were used, then specify $\lambda_{\cal S}^{\bullet}$. Nevertheless, (\ref{potom}) contains the values of $\mu^\bullet_{\cal S}$ for the convenience of writing the formula itself. In practice, both in meaning and in calculations, it is precisely the quantities $\lambda^\bullet_{\cal S}$ that are found. 
\end{remark}

\subsection{Constructing descendants bypassing the stage of pseudo-descendants}

Combining Theorem 1 and Property 5, we obtain a method for constructing a minimal descendant bypassing the pseudo-descendant stage.

\begin{theorem}
Let $F\in\tilde{\cal F}^{k+1}$, $y\in{\cal K}_F$ and satisfied (\ref{potom}). Let also $H\in{\cal F}^\circ_{\cal Y}$, where ${\cal Y}={\cal V}T^F_y$, and
\begin{equation}
\mu_{\cal Y}^\circ=\Upsilon^H_{\cal Y} \ , 
\label{muh}
\end{equation}
and ${\cal D}$ is the set of vertices of a connected component of the graph $H|_{\cal Y}$ containing vertex $y$.  Then the graph $R=F^H_{\uparrow{\cal D}}$ is a minimal descendant of the forest $F$  
and $\Upsilon^R_{\cal Y}=\mu_{\cal Y}^\circ$. 
\end{theorem}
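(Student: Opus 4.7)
The plan is to reduce Theorem 2 to Theorem 1 by first constructing an auxiliary minimal pseudo-descendant $G\in\tilde{\cal F}^k\cap{\cal P}^F_y$ whose restriction to ${\cal Y}$ coincides with $H|_{\cal Y}$, and then applying Theorem 1 to the triple $(F,G,{\cal D})$. The resulting forest turns out to be exactly $R=F^H_{\uparrow{\cal D}}$, because on ${\cal D}$ the forests $G$ and $H$ prescribe the same outgoing arcs at every vertex.

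Concretely, I would set $G:=F^H_{\uparrow{\cal Y}}$: keep the arcs of $F$ issuing from $\overline{\cal Y}$, and at every vertex of ${\cal Y}$ take the outgoing arc given by $H$ (such arcs exist and are unique because $H\in{\cal F}^\circ_{\cal Y}$). The main technical step is verifying $G\in{\cal P}^F_y$. Every vertex of ${\cal Y}$ now has an outgoing arc, so ${\cal K}_G\subseteq\overline{\cal Y}$; since arcs from $\overline{\cal Y}$ are unchanged from $F$, one gets ${\cal K}_G={\cal K}_F\setminus\{y\}$ and $G|_{{\cal V}T^F_q}=T^F_q$ for every $q\in{\cal K}_G$. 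For acyclicity I would trace the unique outgoing trajectory in $G$ from an arbitrary vertex: from a vertex of $\overline{\cal Y}$ it coincides with the $F$-trajectory and ends at some $q\in{\cal K}_F\setminus\{y\}$; from a vertex of ${\cal Y}$ it follows $H$-arcs, cannot cycle inside ${\cal Y}$ because $H$ is acyclic, and cannot stay inside ${\cal Y}$ indefinitely because every vertex of ${\cal Y}$ has an outgoing $H$-arc while no root of $H$ lies in ${\cal Y}$, so it eventually exits to $\overline{\cal Y}$ and then terminates as in the previous case.

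At this point $\Upsilon^G_{\cal Y}=\Upsilon^H_{\cal Y}=\mu^\circ_{\cal Y}$ by (\ref{muh}), which is exactly (\ref{ul}) at ${\cal S}={\cal Y}$; combined with the assumed (\ref{potom}), Property 5 gives $G\in\tilde{\cal F}^k$. I would then observe that $G|_{\cal Y}=H|_{\cal Y}$ (an arc of $G|_{\cal Y}$ issues from ${\cal Y}$ and is by construction an $H$-arc landing in ${\cal Y}$; conversely every arc of $H|_{\cal Y}$ appears in $G$), so the connected component of $G|_{\cal Y}$ through $y$ is precisely ${\cal D}$. Theorem 1 applied to $(F,G,{\cal D})$ therefore yields that $F^G_{\uparrow{\cal D}}$ is a minimal descendant of $F$, and since $G$ and $H$ agree on the outgoing arc of every $u\in{\cal D}\subseteq{\cal Y}$, this graph equals $R=F^H_{\uparrow{\cal D}}$, proving the first claim. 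Finally, $R$ is itself a minimal pseudo-descendant in ${\cal P}^F_y$, so the necessity direction of Property 5 forces $\Upsilon^R_{\cal Y}=\mu^\circ_{\cal Y}$. The only genuinely delicate point in the plan is the forest-and-root-structure check for $G$; once that is in place, Theorem 1 and Property 5 do all the rest.
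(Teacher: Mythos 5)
Your proposal is correct and follows essentially the same route as the paper: construct the pseudo-descendant $G=F^H_{\uparrow{\cal Y}}$, invoke Property 5 (with the hypothesis (\ref{potom})) to conclude $G\in\tilde{\cal F}^k$, note $G|_{\cal Y}=H|_{\cal Y}$ so that Theorem 1 applies with this ${\cal D}$, identify $F^G_{\uparrow{\cal D}}=F^H_{\uparrow{\cal D}}$, and use (\ref{ul}) again for the weight equality. The only difference is that you spell out the forest- and root-structure verification for $G$, which the paper dismisses with ``by construction''; that extra care is harmless and arguably an improvement.
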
 

\begin{proof}
By construction, the graph $G=F^H_{\uparrow{\cal Y}}$ is a pseudo-descendant of $F$ and $\Upsilon^G_{\cal Y}=\mu^\circ_{\cal Y}$. In this case, (\ref{potom}) continues to be executed, since it concerns only the graph $F$. Then $G$ satisfies the conditions of Property 5, so it is minimal. Since $G|_{\cal Y}=H|_{\cal Y}$, then ${\cal D}$ is the set of vertices of the connected component of the graph $G|_{\cal D}$ containing the vertex $y$. Then, by Theorem 1, the graph $R=F^G_{\uparrow{\cal D}}$ is a minimal descendant of the forest $F$. 
And since the same arcs come from the vertices of the set ${\cal D}$ in the forests $G$ and $H$, then $R=F^H_{\uparrow{\cal D}}$.

To conclude the proof, we note that in graphs $R$ and $G$ the arcs coming from the vertices of the set ${\cal Y}\setminus {\cal D}$ are, generally speaking, different, nevertheless
\begin{equation*}
\Upsilon^R_{\cal Y}=\Upsilon^G_{\cal Y} \ . 
\end{equation*} 
Indeed, both forests $R$ and $G$ are minimal. The graph $R$, being a descendant, in particular, is also a pseudo-descendant of $F$, as is the graph $G$ (both of them belong to the set ${\cal P}^F_y$). Thus, Property 5 holds for them (\ref{ul}). 
\end{proof}

If $F$ is some minimal forest, then for each of its trees $T^F_q$, $q\in{\cal K}_F$, the weights  $\lambda_{{\cal V}T^F_q}^\bullet =\mu^\bullet_{{\cal V}T^F_q}= \Upsilon^{T^F_q}$ are automatically known. Let the quantities $\mu_{{\cal V}T^F_q}^\circ$ be known too, and let $y$ be the root for which (\ref{potom}) holds. 
 In the \cite{V8} algorithm, forest $H\in{\cal F}^\circ_{\cal Y}$, ${\cal Y}={\cal V}T^F_y$, is found for which (\ref{muh}) holds, and a pseudo-descendant of the forest $F$ is constructed from it --- forest $G=F^H_{\uparrow{\cal Y}}$. Theorem 2 allows for given $H$, to immediately construct a minimal descendant without creating a pseudo-descendant. But although the need to construct a minimal pseudo-descendant disappears completely, the same preliminary procedure is necessary. Namely: under given $F$, for each root $q\in{\cal K}_F$, construct the graph $H^q\in{\cal F}^\circ_{\cal Q}$, ${\cal Q} ={\cal V}T^F_q$, for which $\Upsilon_{\cal Q}^{H^q}=\mu_{\cal Q}^\circ$, and determination of the  root $y$ satisfying (\ref{potom}).

\subsection{Finding a connected component and an absorbing tree}

To determine the set of vertices ${\cal D}$ of a connected component $H|_{\cal Y}$ containing a given vertex $y$, you can use both breadth-first search and depth-first search \cite{CLRS} in undirected graph.
If graph has $N$ vertices and $M$ edges, then the complexity of determining all connected components by depth-first search is $O(N+M)$.
If graph is an undirected forest (acyclic graph) with $k$ components, then the number of edges in it is $(N-k)$ and the total complexity is estimated to be $O(N)$.

We assume that in $H$ arcs originate only from the vertices of the set ${\cal Y}={\cal V}T^F_y$. At the same time, by definition, arcs outcome from all the vertices of the set ${\cal Y}$. Therefore, from the set of vertices of any connected component of  $H|_{\cal Y}$ in the graph $H$ itself, there is a single arc whose entry does not belong to the set ${\cal Y}$. This property allows the search itself to be performed not in the induced forest $H|_{\cal Y}$, but in the forest $H$ itself. In this case, the graph $H$  may even be connected, that is, it may be a tree with a root at some vertex $u\notin{\cal Y}$, and at the same time the graph $H|_{\cal Y}$ may have several components.

In our case, the forest $H$ is transformed into an acyclic graph by simply ignoring the orientation (the arcs are treated as edges). The weights of the arcs (edges) do not matter.  It is required to construct only one component of the induced forest $H|_{\cal Y}$ containing vertex $y$. To do this, depth-first (or breadth-first) search is started from vertex $y$ in the forest $H$ itself. 
Moreover, as soon as an arc $(w,u)$ appears in the search, the entry of which (vertex $u$) does not belong to the set ${\cal Y}$ (and this arc is unique), then this arc and vertex $u $ are removed from the constructed fragment and the search continues.  After completing the procedure, the set ${\cal D}$ is constructed. This set is the set of vertices of the required component of the induced forest $H|_{\cal Y}$.
The vertex $u$, which does not belong to it, is also highlighted. 
The time to construct the required component does not exceed $O(|{\cal Y}|)$.

If each vertex is provided with an index pointing to the root of the tree in the forest $F$ under study to which it belongs, then the appearance of the vertex $u\notin{\cal Y}$ will be detected immediately, and the “absorbing” tree will also be immediately detected.  
This is some tree $T^F_x$: $u\in{\cal V}T^F_x$.
It is this tree that absorbs the modified tree $T^F_y$, forming the tree $T^R_x$ of the minimal descendant $R=F^H_{\uparrow{\cal D}}$ of the forest $F$.
Since $R$ is minimal, then by Property 3 $\mu^\bullet_{\cal Z}=\lambda^\bullet_{\cal Z}= \Upsilon^R_{\cal Z}$ holds, where $ {\cal Z}={\cal V}T^R_x$. Therefore the expression (\ref{p}) takes the form

\begin{equation}
\mu^\bullet_{\cal Z}=\mu^\bullet_{\cal X}+\mu^\circ_{\cal Y} \ , \ \ {\cal X}={\cal V}T^F_x, \  {\cal Y}={\cal V}T^F_y \ , \ \ {\cal Z}={\cal X}\cup{\cal Y} \ .
\label{Z}
\end{equation}

In fact, when constructing a descendant, only arcs coming from the vertices of the set ${\cal D}$ are replaced. That's why

\begin{equation}
\mu_{\cal Y}^\circ=\Upsilon^F_{{\cal Y}\setminus{\cal D}}+\Upsilon^H_{\cal D}=\Upsilon^F_{\cal Y} -\Upsilon^F_{\cal D}+\Upsilon^H_{\cal D}=\mu^\bullet_{\cal Y} -\Upsilon^F_{\cal D}+\Upsilon^H_{\cal D} . 
\label{Y}
\end{equation}

In example in Fig. \ref{poda} the forest $H$ consists of arcs coming in $G$ from the vertices of the set ${\cal Y}$. 
The arc $(w,u)$ is the only arc that in $H$ has an outcome in ${\cal D}$ and an end in $\overline{\cal D}$. In the example, $u\in{\cal V}T^F_x$ and it is this tree that absorbs the modified tree $T_y^F$ (see also Fig. \ref{f} and Fig. \ref{pods}). Moreover, the set ${\cal D}$ is the set of vertices of the required connected component of the induced forest $H|_{\cal Y}$.

\section{Algorithm for constructing minimal descendants}
\subsection{Using simplified notation}
 
In what follows, only the weights of sets of tree vertices of the minimal forest $F$ under study are used.
Therefore, we will adopt the following convention, which shortens the designation of the weights of sets. Instead of the entire set of vertices of the tree $T^F_q$, we will specify only its root as the subscript. That is, taking into account (\ref{mlu}):  

\begin{equation}
	\lambda^\bullet_q=\mu^\bullet_q\equiv\mu^\bullet_{{\cal V}T^F_q}=\lambda^\bullet_{{\cal V}T^F_q}, \  \  \mu_q^\circ\equiv\mu^\circ_{{\cal V}T^F_q},  \ \ q\in{\cal K}_F \ .
	\label{lm}
\end{equation}
\subsection{Description of the algorithm}

Since we are considering minimization problems, we accept the following convention. If there is no arc $(q,p)$ in the graph, then the corresponding weight is set equal to infinity: $v_{qp}=\infty$.

Let us describe an algorithm in which, in accordance with Property 5 (which is the rationale for the algorithm for constructing minimal pseudo-descendants in \cite{V8}) and Theorem 2, related forests are constructed.

Since it is necessary to monitor the fulfillment of conditions (\ref{potom}) and (\ref{muh}), we select two lists separately: list $\bold\Upsilon$ of non-infinity forest-like minima $\mu_{q}^\circ$ , $q\in{\cal K}_F$, and an ascending list of $\bold\Delta$ increments $\mu_{q}^\circ- \mu^\bullet_{q}$. We also introduce labels: $x$ --- the name of the root under study, $y$ --- the name of the first vertex of the ordered list $\bold\Delta$. 

The algorithm starts with an empty spanning forest $F\in{\cal F}^N=\tilde{\cal F}^N$ (this forest is the only element of the set ${\cal F}^N$). For it $\mu_{q}^\circ=\mu^\bullet_{q}=0$,  $q\in{\cal K}_F={\cal N}$, ${\bold\Upsilon}={\bold\Delta }=\emptyset$.

1st step.

$a)$ For each vertex $q$ there is an arc $(q,p_q)$ of minimum weight coming from it. We put $\mu_{q}^\circ=v_{qp_q}=\underset{p\neq q}{\min} v_{qp}$. 
Spanning forests $H^q$ are created, consisting of one arc: ${\cal A}H^q=\{ (q,p_q) \}$. 

$b)$ The values of $\mu_q^\circ$ other than $\infty$ are entered in the list of $\bold\Upsilon$. If this list is empty (which corresponds to an empty source graph), the algorithm stops working. Otherwise, the increments $(\mu_q^\circ-\mu^\bullet_q)=\mu_q^\circ$ are entered in the list $\bold\Delta$.

$c)$ The list $\bold\Delta$ is sorted in ascending order. 

$d)$ The vertex corresponding to the first element in the list $\bold\Delta$ receives the label $y$. The entry of an arc coming from $y$ at forest $H^y$ receives the label $x$ (the root under study is assigned): $\mu_y^\circ=v_{yx}$.

$e)$  The arc $v_{yx}$ is added to the empty forest. The number of trees decreased by one. The resulting graph becomes the new forest $F$ under study: formally we set $F:=F^{H^y}_{\uparrow\{ y\}}$ (At the first step, this forest simply coincides with $H^y$). 

The weight $\mu^\bullet_x:=\mu^\bullet_x+\mu_{y}^\circ=0+v_{yx}=v_{yx}$ is updated. The set of vertices of the empty tree consisting of vertex $x$ has increased --- a new vertex has been added --- vertex $y$.

$f)$ The values with the label $y$ (the vertex of $y$ has ceased to be the root) and with the label $x$ (they need to be adjusted) are removed from the lists $\bold\Upsilon$ and $\bold\Delta$. 

Further steps $n=2,3, \ldots , N-1$ occur in a recurrent manner.

$n$-th step. 

The forest under study $F$ consists of $(k+1)$ trees ($k=N-n$). The weights $\mu^\bullet_{p}$ are known for all its trees (indexed by the roots).  For all roots except the root under study $x$, the values $\mu_{p}^\circ$ that make up the list  $\bold\Upsilon$ are known. The ordered list of $\bold\Delta$ increments is also incomplete --- there is no information about the increment associated with the root $x$ under study.

$a)$ For a tree with the root under study (label $x$), the value $\mu_x^\circ$ is determined using the CLE algorithm (see paragraph 3.3 of this article) and the spanning forest $H^x\in{\cal F}^\circ_{\cal X}$, ${\cal X}={\cal V}T^F_x$ is found, for which $\Upsilon^{H^x}=\mu_{x}^\circ$. 

$b)$ If there is no such forest ($\mu_x^\circ=\infty$), the list $\bold\Upsilon$ is checked. If $\bold\Upsilon=\emptyset$ the algorithm is interrupted (the constructed forest $F$ has the minimum possible number of trees). Otherwise, go to point $d)$. 

If $\mu_x^\circ<\infty$, go to point $c)$.  

$c)$ The value $\mu_x^\circ$ is entered into the list $\bold\Upsilon$.  Increment $(\mu_{x}^\circ- 
\mu^\bullet_{x})$ is entered into the ordered list $\bold\Delta$ and its place in this list is determined.

$d)$ The root corresponding to the first element of the list $\bold\Delta$ receives the label $y$. The value $\mu_{y}^\circ$ corresponds to the spanning forest $H=H^y\in{\cal F}^\circ_{\cal Y}$, ${\cal Y}={\cal V}T ^F_y$, defined in point $a)$ at some previous step or at the 1st step: $\Upsilon^{H}=\mu_{y}^\circ$. 

$e)$ 
In the forest $H$, a depth-first (or breadth-first) search is launched starting at vertex $y$. As soon as the search encounters an arc $(w,u)$ whose entry (vertex $u$) does not belong to the set ${\cal Y}$, this arc and vertex $u$ are removed from the search and the search continues until the set of vertices ${\cal D}$ is formed of the desired connected component (see Section 4.3 of this article).
 The vertex $u$ belongs to some tree of the forest $F$. The root of this tree receives the label $x$ --- a  new root under investigation is assigned. 

$f)$ By Theorem 2, the graph $F^{H}_{\uparrow{\cal D}}$ is the minimal descendant of the forest $F$, consisting of $k$ trees. We set $F:=F^{H}_{\uparrow{\cal D}}$.  

In accordance with (\ref{Z}), we set $\mu^\bullet_{x}:= \mu^\bullet_{x}+\mu_{y}^\circ$.  Now this weight corresponds to the set of vertices of the enlarged tree (the tree rooted at $x$ has absorbed the modified tree, whose root was vertex $y$ before the reorientation). 
   
 From the lists  $\bold\Upsilon$  and $\bold\Delta$ values are deleted with the index $y$ (the vertex of $y$ has ceased to be the root). Values with the label $x$ are also removed from these lists (they need to be recalculated, since now $x$ is the root of the enlarged tree). \footnote{Note that if ${\bold\Upsilon}=\emptyset$ is at this point in the algorithm, the algorithm's operation is not interrupted, since the value of $\mu_x^\circ$ has not yet been determined and added to the list.}  
 
$g)$ $n:=n+1$ (step number increases by one). 
If $n=N-1$ the algorithm terminates (the last forest constructed is the minimum weight spanning tree. Its root is the vertex labeled $x$). 
Otherwise, go to the $n$-th step.

The algorithm stops working (see point $b)$ of the recurrent procedure) when a forest consisting of the minimum possible number of trees has been built.  If the source graph has at least one spanning tree, then the output of the algorithm will be a set of $N$ minimal spanning related forests --- one representative each, consisting of $k=1,2,\ldots, N$ trees.

To remember spanning forests (this point is not described in the algorithm for brevity), it is enough to enter into the corresponding list only the arcs coming from the vertices of ${\cal D}$ (point $e$). Arcs coming from other vertices of the minimal forest under construction are the same as in the previous one.

\subsection{Algorithm complexity}

A component of the proposed algorithm is the CLE algorithm for finding a minimum spanning tree with a given root. In the case of dense graphs ($M\sim N^2$, where $N$ is the number of vertices and $M$ is the number of arcs), its most efficient implementations \cite{Tar} and \cite{GGST} have complexity $O(N^2)$.

When estimating complexity, we assume that the original graph $V$ is dense.

$1$th step. 

$a)$ 
The total complexity of determining the minimum element for the adjacency list of each vertex is $O(N^2)$. 

$c)$  It takes $O(N\log N)$ operations to arrange a list $\bold\Delta$ consisting of no more than $N$ values.

$n$-th step.

$a)$  At the $n$th step, the forest $F$ contains a $k+1$ trees ($k=N-n$). Then any tree contains at most $n$ vertices. Only one weight is calculated, the weight of $\mu_{x}^\circ$. The CLE algorithm is applied with the introduction of a generalized vertex $*$ combining the vertices of $\overline{\cal X}$ into a single vertex (see point 3.3). The calculation takes $O(|{\cal X}|^2+1)^2$ operations. Since $|{\cal X}|\leq N-k=n$, the complexity of the most costly situation (when a single tree grows all the time) is not higher than $O(N-k+1)^2=O(n+1)^2$. 

$c)$ There are no more than $k$ values in the ordered list of $\bold\Delta$. Adding the increment $\mu_{x}^\circ -\mu^\bullet_{x}$ to this list and finding its place in it takes $O(\log k)$ comparison actions. In terms of the step number, this is $O(\log(N-n))$.

$e)$ A depth-first search to determine the connected component of the forest $H|_{\cal Y}$, which contains
the vertex $y$, takes $O(|{\cal Y}|)$ operations.  Since $|{\cal Y}|\leq n$, the complexity does not exceed $O(n)$. 

The main computational costs are associated with step $a)$ of the recurrent procedure, where the CLE algorithm is applied. The complexity of the $n$-th step (except for the starting one) does not exceed estimates of $O(n^2)$.  Summing over all steps, we find that the total complexity does not exceed $O(N^3)$.

\subsection{Final notes on the algorithm}

The proposed algorithm has the same complexity as the \cite{V8} algorithm, but it has several advantages. Firstly, the forests under construction are related, that is, they differ from each other in the least possible way. Secondly, the CLE algorithm, which is part of both algorithms, is applied in the new version only once at each step (since at each step the disappearing tree is absorbed by exactly one other tree, it is applied only to this last one). Note also that when constructing a minimum spanning tree, the complexity of all three algorithms is the same. The difference is that the CLE algorithm with a dedicated root has to be applied $N$ times, changing the root each time. That is, it actually produces $N$ minimal spanning trees with a root at each vertex, from which the minimum one is then selected.  The proposed algorithm produces a sequence of minimal related forests with a different number of trees (up to a spanning tree), regardless of which vertices turn out to be the roots. The roots are determined by the algorithm itself.

\section{Analogy of the structure of algorithms with stochastic processes}

The CLE algorithm (as applied to entering  forests) starts with the fact that an arc of minimum weight is emitted from each vertex except the root. The vertices that form one cycle or another condense into a single vertex. For these enlarged vertices, the weights of the new transition arcs are determined, and for the new graph, a minimum weight arc is emitted from each vertex except the root. Each of the cycles that occurs in this case is combined back into one vertex, and so on until the sequence of enlargements leads to a tree. Then, the system of nested loops unfolds in the opposite direction: an arc is removed from the loop, the outcome of which is the outcome of the arc of the intermediate tree.  Thus, the CLE algorithm, despite the fact that it builds a spanning tree with a dedicated root, indicates more about the cyclic structure of the graph than about its tree structure. And if you are not interested in the spanning tree itself, but in how it is obtained and from which substructures, then the system of nested loops is an indicator.

If we consider a stochastic differential equation
\begin{equation}
\dot x_t^\varepsilon=b(x^\varepsilon_t)+ \varepsilon\dot w_t \ , 
\label{win}
\end{equation}
where $w_t$ is a Wiener process, $\varepsilon$ is a small parameter, then at exponentially large times consistent with the small parameter ($t(\varepsilon)=\tau \exp(K/\varepsilon)$) the process is significantly turns into a Markov chain with a finite number of states (their number is equal to the number of areas of attraction of the dynamical system $\dot x_t=b(x)$). The individual trajectories of this process mostly wander with a dedicated circumvention direction through some cycles consisting of some areas of attraction. Belonging to a particular cycle depends on the starting point. As the time scale increases (increasing $K$), a set of cycles of the first rank passes into cycles of the next rank, in which cycles of the previous level are perceived as a single area. Etc. A hierarchy of nested loops \cite{VF} is created. In this sense, if we look at the CLE algorithm itself as a process, it "simulates" \ the preferential wandering of individual process trajectories (\ref{win}) and indicates which cycles these trajectories will follow depending on the time scale and starting point.

The infinitesimal operator of the semi-group generated by the transition function of the Markov process (\ref{win}) is formally conjugate to the diffusion operator and appears on the right side of the Fokker-Planck equation, which describes the evolution of one-dimensional distributions of this process. 
At large times, diffusion is well approximated by a Markov chain \cite{V1}-\cite{V2}.   
The row vector of the distribution density $\vec{\bf x}^\varepsilon$ satisfies the system
\begin{equation}
\frac{d \vec {\bf x}^\varepsilon}{d t}=-\vec{\bf x}_0^\varepsilon L^\varepsilon \ ,
\label{ode}
\end{equation}    
where $L^\varepsilon$ is a matrix with off-diagonal elements of the form: $L^\varepsilon_{ij}=l_{ij}\exp(-v_{ij}/ 
\varepsilon^2)$. 
The values of $v_{ij}$ have the meaning of working against forces of the drift field $b(x)$ during the transition from one region of attraction to the neighboring one. 
Eigenvalues of  matrix $L^\varepsilon$ are exponentially small and have the order $\exp((\phi^{k+1}-\phi^{k})/\varepsilon^2)$, $k=1,2,\ldots ,N$. When exponential order of time $K$ passes through the corresponding increments $(\phi^{k}-\phi^{k+1})$, nature of the behavior (\ref{ode}) changes and sub-limit distributions are formed --- distributions that are invariant on a given time scale. These distributions are concentrated in the roots of forests from $\tilde{\cal F}^k$, and trees with these roots represent areas of attraction of the corresponding roots. Thus, the steps of algorithm for constructing minimal forests “simulate” the evolution of one-dimensional distributions and the formation of invariant measures in sub-processes corresponding to different time scales $t(\varepsilon)$.  Moreover, the algorithm calculates spectral characteristics of the dynamics (\ref{ode}). But its scope is much wider. From Big-Data structuring to the general problem of clustering --- on what principle are clusters formed, what goes into a cluster and what are the connections between clusters. 

\centerline{Abstract}

\begin{center}{Algorithm for Constructing Related Spanning Directed Forests of Minimum Weight}
\end{center}

\centerline{Buslov V.A.}

\parbox[t]{12cm}
{\small 
An algorithm is proposed for constructing directed spanning forests of the minimum weight, in which the maximum possible degree of affinity between the minimum forests is preserved when the number of trees changes. The correctness of the algorithm is checked and its complexity is determined, which does not exceed $ O (N ^ 3) $ for dense graphs. The result of the algorithm is a set of related spanning minimal forests consisting of $ k $ trees for all admissible $ k $.}

\vspace{0.5cm}

St. Petersburg State University, Faculty of Physics, Department of Computational Physics.

198504 St. Petersburg, Old Peterhof, st. Ulyanovskaya, 3.

Email: abvabv@bk.ru, v.buslov@spbu.ru

\end{document}